\renewcommand{\P}{\mathsf P}
\newcommand*{\abs}[1]{\left\lvert#1\right\rvert}
\newcommand*{\set}[1]{\left\{#1\right\}}
\newcommand{\E}{\mathsf E}
\newcommand{\R}{\mathbb R}
\newcommand{\ind}{\mathbf 1}
\DeclareMathOperator{\cov}{Cov}
\DeclareMathOperator{\var}{Var}
\newtheorem{theorem}{Theorem}[section]
\newtheorem{proposition}[theorem]{Proposition}
\newtheorem{corollary}[theorem]{Corolary}
\theoremstyle{definition}
\newtheorem{definition}[theorem]{Definition}
\theoremstyle{remark}
\newtheorem{remark}[theorem]{Remark}
\begin{document}
\selectlanguage{english}

\title[Fractional Cox-Ingersoll-Ross process]
	{Stochastic Representation and Pathwise Properties of Fractional Cox-Ingersoll-Ross Process}

\author{Yu. Mishura}
\address{Department of Probability Theory, Statistics and Actuarial Mathematics,
	Faculty of Mechanics and Mathematics,
	Taras Shevchenko National University of Kyiv,
	Volodymyrska St., 64/13,
	Kyiv 01601,
	Ukraine}
\email{myus@univ.kiev.ua}

\author{V. Piterbarg}
\address{Laboratory of Probability Theory,
 	Faculty of Mechanics and Mathematics,
  	Moscow State University,
 	Leninskie Gory, 1,
 	Moscow 119991,
 	Russia }
\email{piter@mech.math.msu.su}

\author{K. Ralchenko}
\address{Department of Probability Theory, Statistics and Actuarial Mathematics,
	Faculty of Mechanics and Mathematics,
	Taras Shevchenko National University of Kyiv,
	Volodymyrska St., 64/13,
	Kyiv 01601,
	Ukraine}
\email{k.ralchenko@gmail.com}

\author{A. Yurchenko-Tytarenko}
\address{Department of Probability Theory, Statistics and Actuarial Mathematics,
	Faculty of Mechanics and Mathematics,
	Taras Shevchenko National University of Kyiv,
	Volodymyrska St., 64/13,
	Kyiv 01601,
	Ukraine}
\email{ayurty@gmail.com}	

\subjclass[2010]{Primary 60G22; Secondary 60G15; 60H10}
\date{}
\keywords{fractional Cox-Ingersoll-Ross process, stochastic differential equation, fractional Ornstein-Uhlenbeck process, Stratonovich integral}

\begin{abstract}
    We consider the fractional Cox--Ingersoll--Ross process  satisfying the stochastic differential equation (SDE)
    $dX_t = aX_t\,dt + \sigma \sqrt{X_t}\,dB^H_t$
    driven by a fractional Brownian motion (fBm) with Hurst parameter exceeding $\frac{2}{3}$.
    The integral $\int_0^t\sqrt{X_s}dB^H_s$ is considered as a pathwise integral and is equal to the limit of Riemann--Stieltjes integral sums.
    It is shown that the fractional Cox--Ingersoll--Ross process is a square of the fractional Ornstein--Uhlenbeck process until the first zero hitting.
    Based on that, we consider the square of the fractional Ornstein--Uhlenbeck process with an arbitrary Hurst index and prove that until its first zero hitting it satisfies the specified SDE if the integral $\int_0^t\sqrt{X_s}\,dB^H_s$ is defined as a pathwise Stratonovich integral.
    Therefore, the question about the first zero hitting time of the Cox\,--\,Ingersoll\,--\,Ross process, which matches the first zero hitting moment of the fractional Ornstein\,--\,Uhlenbeck process, is natural.
    Since the latter is a Gaussian process, it is proved by the estimates for distributions of Gaussian processes that for $a<0$ the probability of hitting zero in finite time is equal to 1, and in case of $a>0$ it is positive but less than 1.
    The upper bound for this probability is given.
\end{abstract}

\maketitle

\section{Introduction}

The standard Cox\,--\,Ingersoll\,--\,Ross diffusion process was introduced and studied in papers \cite{CIR1}--\cite{CIR3} for the purpose of better interest rate modeling and as a generalization of the Vasicek model.

The Cox\,--\,Ingersoll\,--\,Ross process (CIR) is a one-factor model that contains one source of randomness. In this model it is assumed that the instantaneous value of the interest rate $r_t$ is a solution of the stochastic differential equation (SDE)
\[
    dr_t = a(b-r_t)\,dt + \sigma \sqrt{r_t}\,dW_t, \quad t\ge0,
\]
where
$a, b, \sigma \in \R^+$, $W=\{W_t,t\ge0\}$~is a Wiener process, $r|_{t=0}=r_0>0$.
The parameter $a$ corresponds to the speed of adjustment (i.e. convergence to the mean), $b$ is the mean, and $\sigma$~ is the volatility.
If the condition $2ab\ge\sigma^2$ holds, the process takes only positive values with a probability of 1 and does not hit zero.
Unlike the Vasicek model, wherein the standard deviation is a constant, the standard deviation $\sigma\sqrt{r_t}$ of the CIR process is proportional to the value of the process.

The CIR process is ergodic and has a stationary distribution.
The distribution of its future values $r_{t+T}$, provided that $r_t$ is known, is a noncentral chi-square distribution, and the distribution of the limit value $r_{\infty}$ is a gamma distribution.
The CIR process is also widely used in stochastic volatility modeling in the Heston model.
An extensive bibliography on this subject can be found, for example, in \cite{KMM,KIM}.

Together with the above, it should be noted that in reality many financial models demonstrate the so-called ''memory phenomenon'', meaning that price changes in the market cannot be characterized only by randomness generated by the Wiener process.
Regarding the description of financial markets with memory, see, for example, \cite{AI,BM,DGE,YMH}.

Although it was considered that the best models for interest rates and stochastic volatility were those containing a fractional Brownian motion (fBm) with the Hurst index $H>\frac{1}{2}$, recent studies (see, for example, \cite{BFG}) have noted that volatility can be so irregular that the Hurst index is estimated to be $0.1$.

Consequently, to simulate the corresponding interest rates or stochastic volatility, it is useful to introduce the fractional Ornstein-Uhlenbeck process or, in order to preserve the nonnegativity of the random process, the fractional Cox-Ingersoll-Ross process.
While the fractional Ornstein-Uhlenbeck process is Gaussian, so there are no problems with stochastic integration (the properties of the fractional Ornstein-Uhlenbeck process are described in \cite{CKM}), there are several different approaches to the definition of the fractional CIR process. The pathwise integration with respect to fBm is considered for $H>2/3$ in \cite{MMS}, and the so-called ''rough path approach'' is introduced in \cite{Mar}. Another method is based on the fact that the standard CIR process belongs to class of Pearson diffusions, so the fractional CIR process can be defined as a time changed CIR process with inverse stable subordinator (see \cite{LMS1, LMS2}).

In this paper we first consider the SDE
\begin{equation}\label{eq: 1}
	dX_t = aX_t\,dt + \sigma\sqrt{X_t}\,dB_t^H, \quad t\ge0,
\end{equation}
where $X|_{t=0}=x_0>0$, $a\in\R$, $\sigma>0$,
$B^H=\{B_t^H,t\ge0\}$ is a fBm with the Hurst parameter
\mbox{$H \in (0,1)$}, i.e. a centered Gaussian process with the following covariance function: $\E B_t^H B_s^H = \frac{1}{2} \left(t^{2H} + s^{2H} - \abs{t-s}^{2H}\right)$.
In this case, we assume that $H\in(2/3,1)$, because then the integral $\int_0^t\sqrt{X_s}\,dB_s^H$ is defined as a pathwise integral, which is the limit of Riemann\,--\,Stieltjes integral sums.
It is proved that the unique solution of the equation \eqref{eq: 1} is the square of the fractional Ornstein\,--\,Uhlenbeck process until the first moment of hitting zero.
It is clear that, due to the uniqueness of the solution, the solution of the equation \eqref{eq: 1} remains at zero after the first moment of reaching it.

After this, we consider the square of the fractional Ornstein\,--\,Uhlenbeck process with an arbitrary Hurst index $H \in (0,1)$ and prove that this square satisfies the equation \eqref{eq: 1} until the first moment of hitting zero, if the stochastic integral is considered as the pathwise Stratonovich integral.

Naturally, we face the question of the corresponding zero hitting time finiteness.
It is proved that for the positive speed of adjustment the probability of finiteness of this moment is 1, and for the negative speed of adjustment it is between 0 and 1, and an upper bound for this probability is given.
As an auxiliary result, the form of the covariance function of the fractional Ornstein\,--\,Uhlenbeck process is used.

The paper is organized as follows.
In Section \ref{sec1}, equation \eqref{eq: 1} is considered for $H\in(2/3,1)$ with the Riemann-Stieltjes pathwise integral with respect to fBm and it is shown that its solution is the square of the fractional Ornstein\,--\,Uhlenbeck process until hitting zero.
Section \ref{sec2} introduces the fractional Cox\,--\,Ingersoll\,--\,Ross process for an arbitrary $H\in(0,1)$ as the square of the corresponding fractional Ornstein-Uhlenbeck process and shows that the process, defined in this way, satisfies the equation \eqref{eq: 1} with the pathwise Stratonovich integral.
Section \ref{sec3} is devoted to studying the probability of hitting zero in a finite time by the described process.
Section \ref{sec4} contains an auxiliary result --- a formula for the covariance function of the fractional Ornstein-Uhlenbeck process.

\section{Fractional Cox\,--\,Ingersoll\,--\,Ross process for $H\in(2/3, 1)$}\label{sec1}

Consider the stochastic differential equation of the following form:
\begin{equation}\label{main sde}
    dX_t = \tilde a X_t\,dt + \tilde\sigma\sqrt{X_t}\,dB_t^H, \quad t\ge0,
\end{equation}
where $X|_{t=0}=x_0>0$, $\tilde a\in\R$, $\tilde\sigma>0$.

According to Theorem 6 from \cite{MMS}, if $H>2/3$, the equation \eqref{main sde} has a unique solution until the first moment of reaching zero, and the integral
$\int_0^t \sqrt{X_s}\,dB_s^H$
exists as a pathwise Riemann\,--\,Stieltjes sums limit.
This can be explained empirically as follows: the integral with respect to fractional Brownian motion (for the conditions of existence and properties of such integrals see, for example, \cite{MZ}) exists as a pathwise limit of Riemann-Stieltjes sums if the sum of H\"older exponents of integrator and integrand exceeds 1.
On the other hand, provided the solution exists, it is H\"older-continuous up to order $H$ (see, for example, \cite{feyel}), and hence the integrand function $\sqrt{X_t}$ will be H\"older-continuous up to order $H/2$.
Thus, the condition for the existence of a pathwise integral with respect to fractional Brownian motion and the fact that the integral is the limit of Riemann-Stieltjes sums is the inequality $H/2+H>1$ or $H>2/3$.
We recall that in this case the equation \eqref{main sde} has a unique solution with strictly positive trajectories until the first moment of hitting zero.

Denote
$\tau_0 \coloneqq \inf\{t>0: X_t=0\}$
and consider the trajectories of the process $\{X_t, t\ge0\}$ on $[0,\tau_0)$.
After substitution $Y_t=\sqrt{X_t}$ and using the Ito formula for integrals with respect to fractional Brownian motion (see \cite{mishura}), we obtain
\[
dY_t = \frac{dX_t}{2\sqrt{X_t}} = \frac{\tilde a X_t\,dt}{2\sqrt{X_t}} + \frac{\tilde\sigma}{2}\,dB_t^H.
\]

Denoting $a=\tilde a/2$, $\sigma=\tilde\sigma/2$, we get
\begin{equation}\label{fOU}
dY_t=a\,Y_t\,dt+\sigma\,dB_t^H
\end{equation}
with the initial condition $Y_0=\sqrt{X_0}$.
Thus, the solution $\{X_t, t\in[0,\tau_0)\}$ of the equation \eqref{main sde} is the square of the fractional Ornstein\,--\,Uhlenbeck process (the latter was introduced in \cite{CKM}) until it reaches zero.

\section{Generalization of the fractional Cox\,--\,Ingersoll\,--\,Ross process for $H\in(0, 1)$}
\label{sec2}

Taking into account the conclusions of Section \ref{sec1}, now let us define the fractional Cox\,--\,Ingersoll\,--\,Ross process for all Hurst indices $H\in(0, 1)$.

\begin{definition}\label{def:CIR}
    Let $H\in(0,1)$ be an arbitrary Hurst index, $\{Y_t, t\ge0\}$ be a fractional Ornstein\,--\,Uhlenbeck process, that satisfies equation (\ref{fOU}), and $\tau$ be the first moment of reaching zero by the latter.
    The \emph{fractional Cox\,--\,Ingersoll\,--\,Ross process} is the process $\{X_t, t\ge0\}$ such that for all $t\ge0$
    \begin{equation}
        X_t(\omega) = Y_t^2(\omega) \ind_{\{t<\tau(\omega)\}}.
    \end{equation}
\end{definition}

It should be noted that the process defined in this way satisfies the equation of the form \eqref{main sde}, if we consider the integral $\int_0^t \sqrt{X_s}\,dB_s^H$ as the pathwise Stratonovich integral. Let us give the corresponding definition.

\begin{definition}
    Let $\{X_t, t\ge0\}$, $\{Y_t, t\ge0\}$ be random processes.
    The pathwise \emph{Stratonovich integral} $\int_0^TX_s\circ dY_s$ is a pathwise limit of the following sums
    \begin{equation*}
        \sum_{k=1}^{n} \frac{X_{t_{k}} + X_{t_{k-1}}}{2} \left(Y_{t_{k}} - Y_{t_{k-1}}\right),
    \end{equation*}
    as the mesh of the partition
    $0=t_0<t_1<t_2<\ldots<t_{n-1}<t_n=T$
    tends to zero, in case if this limit exists.
\end{definition}

Indeed, suppose that $\{Y_t,t\ge0\}$ is a fractional Ornstein\,--\,Uhlenbeck process, which starts at $\sqrt{X_0}$, $\tau=\inf\{s>0: Y_s=0\}$, and for some $\omega\in\Omega$ consider the point $t$ such that $t<\tau(\omega)$.
Then, according to the definition ~\ref{def:CIR},
\begin{equation}\label{eq: 2}
    X_t = Y_t^2 = \left(\sqrt{X_0} + a\int_0^tY_s\,ds + \sigma B_t^H\right)^2.
\end{equation}

Consider an arbitrary partititon of the interval $[0,t]$:
\[
    0=t_0<t_1<t_2<\ldots<t_{n-1}<t_n=t.
\]

Using the formula \eqref{eq: 2}, we have
\begin{align*}
    X_t &= \sum_{k=1}^{n} \left(X_{t_k} - X_{t_{k-1}}\right) + X_0
    \\
    &= \sum_{k=1}^n \left(\left[\sqrt{X_0} + a\int_0^{t_k}Y_s\,ds + \sigma B_{t_k}^H\right]^2 - \left[\sqrt{X_0} + a\int_0^{t_{k-1}}Y_s\,ds + \sigma B_{t_{k-1}}^H\right]^2\right) + X_0
    \\
    &= \sum_{k=1}^n \Biggl(2\sqrt{X_0} + a \left(\int_0^{t_k} Y_s\,ds + \int_0^{t_{k-1}} Y_s\,ds\right)
    + \sigma \left(B_{t_k}^H + B_{t_{k-1}}^H\right)\Biggr)\times
    \\
    &\quad\times\left(a \int_{t_{k-1}}^{t_k}Y_s\,ds + \sigma \left(B_{t_k}^H - B_{t_{k-1}}^H\right)\right) + X_0.
\end{align*}

Expanding the brackets in the last expression, we obtain:
\begin{equation}\label{integral sums}
\begin{split}
    X_t &=2 a\sqrt{X_0} \sum_{k=1}^n \int_{t_{k-1}}^{t_k}Y_s\,ds + a^2 \sum_{k=1}^n \left(\int_0^{t_k}Y_s\,ds + \int_0^{t_{k-1}}Y_s\,ds\right) \int_{t_{k-1}}^{t_k}Y_s\,ds
    \\
    &\quad+a\sigma \sum_{k=1}^n \left(B_{t_k}^H + B_{t_{k-1}}^H\right) \int_{t_{k-1}}^{t_k}Y_s\,ds + 2\sigma\sqrt{X_0} \sum_{k=1}^n \left(B_{t_k}^H - B_{t_{k-1}}^H\right)
    \\
    &\quad+ a\sigma \sum_{k=1}^n \left(\int_0^{t_k}Y_s\,ds + \int_0^{t_{k-1}}Y_s\,ds\right) \left(B_{t_k}^H - B_{t_{k-1}}^H\right)
    \\
    &\quad+ \sigma^2 \sum_{k=1}^n \left(B_{t_k}^H
    + B_{t_{k-1}}^H\right) \left(B_{t_k}^H - B_{t_{k-1}}^H\right).
\end{split}
\end{equation}

Let the mesh $\Delta t$ of the partition tend to zero. The first three terms
\begin{equation}\label{lebesgue sums}
\begin{split}
&2a\sqrt{X_0} \sum_{k=1}^n \int_{t_{k-1}}^{t_k}Y_s\,ds + a^2\sum_{k=1}^n \left(\int_0^{t_k}Y_sds + \int_0^{t_{k-1}}Y_s\,ds\right) \int_{t_{k-1}}^{t_k}Y_s\,ds
\\
&\quad\quad+ a\sigma \sum_{k=1}^n \left(B_{t_k}^H + B_{t_{k-1}}^H\right) \int_{t_{k-1}}^{t_k}Y_s\,ds
\\
&\quad\rightarrow 2a\sqrt{X_0}\int_0^tY_s\,ds + 2a^2\int_0^tY_s\int_0^sY_u\,du\,ds + 2a\sigma \int_0^tB_s^HY_s\,ds
\\
&\quad= 2a\int_0^t Y_s \left(\sqrt{X_0} + a\int_0^sY_u\,du + \sigma B_s^H\right)ds
= 2a\int_0^t Y^2_s\,ds
\\
&\quad=2a\int_0^tX_s\,ds
=\tilde a\int_0^tX_s\,ds, \quad \Delta t\rightarrow 0,
\end{split}
\end{equation}
and the last three terms
\begin{equation}\label{stratonovich sums}
\begin{split}
&2\sigma\sqrt{X_0}\sum_{k=1}^n \left(B_{t_k}^H-B_{t_{k-1}}^H\right) + a\sigma\sum_{k=1}^n\left(\int_0^{t_k}Y_sds + \int_0^{t_{k-1}}Y_sds\right) \left(B_{t_k}^H-B_{t_{k-1}}^H\right)
\\
&\quad\quad+\sigma^2\sum_{k=1}^n \left(B_{t_k}^H+B_{t_{k-1}}^H\right) \left(B_{t_k}^H-B_{t_{k-1}}^H\right)
\\
&\quad\rightarrow 2\sigma \int_0^t\bigg(\sqrt{X_0}
+ a\int_0^sY_udu+\sigma B_s^H\bigg)\circ dB_s^H
= \tilde\sigma \int_0^t\sqrt{X_s}\circ dB_s^H,\quad \Delta t\rightarrow 0.
\end{split}
\end{equation}

Thus, the fractional Cox\,--\,Ingersoll\,--\,Ross process, introduced by Definition~\ref{def:CIR}, satisfies the stochastic differential equation
\begin{equation}
\begin{gathered}\label{stratonovich equation}
X_t=X_0+\tilde a\int_0^tX_s ds+\tilde\sigma\int_0^t\sqrt{X_s}\circ dB_s^H,
\end{gathered}
\end{equation}
where $\int_0^t\sqrt{X_s}\circ dB_s^H$ is a pathwise Stratonovich integral.

Let us make a few remarks on the resulting stochastic differential equation.

\begin{remark}
	The passage to the limit in \eqref{stratonovich sums} is correct, since the left-hand side of the equation \eqref{integral sums} does not depend on the partition, and the limit in formula \eqref{lebesgue sums} exists as the pathwise Lebesgue integral, therefore the corresponding pathwise Stratonovich integral also exists.
\end{remark}

\begin{remark}
	If $H>2/3$ the solution of the equation \eqref{stratonovich equation} coincides with the solution of the equation \eqref{main sde}, since the corresponding pathwise Stratonovich integral and the pathwise Riemann\,--\,Stieltjes integral coincide.
\end{remark}

\section{Probability of hitting zero by the fractional Ornstein\,--\,Uhlenbeck process}
\label{sec3}

Let us investigate the probability that the first zero hitting moment $\tau$ of the fractional Ornstein-Uhlenbeck process, which is a solution of equation~\eqref{fOU}, is finite.
According to \cite{CKM}, this solution can be written explicitly as
\begin{equation}\label{eq:fou-explicit}
    Y_t = e^{at} \left(Y_0 + \sigma\int_0^te^{-as}\,dB_s^H\right),
\end{equation}
where the integral with respect to fractional Brownian motion is the limit of Riemann\,--\,Stieltjes sums and can be defined by integration by parts:
\begin{equation}\label{eq:int}
     J_t \coloneqq\int_0^te^{-as}\,dB_s^H \coloneqq e^{-at} B_t^H + a\int_0^t e^{-as} B_s^H\,ds.
\end{equation}

From the formula~\eqref{eq:fou-explicit}, we see that the first zero hitting moment by the process $Y_t$ coincides with the first time the integral~\eqref{eq:int} reaches the level $-Y_0/\sigma$.
Note, that this integral is a normally distributed random variable with zero mean.
Therefore, due to the symmetry of the normal distribution, the probability that the integral~\eqref{eq:int} hits the negative level $-Y_0/\sigma$ coincides with the probability of reaching the positive level $Y_0/\sigma$.
Thus, the problem of studying the probability that integral $J_t$ hits the level $x>0$ in a finite time arises.
Naturally, the behavior of this integral essentially depends on the sign of the parameter $a\in\R$.

Let us consider two cases.

\subsection*{The case $a\le0$}
\begin{proposition}\label{prop0}
	If $a<0$ then
	\begin{equation*}
        \P\left(\limsup_{t\to\infty}J_t = +\infty\right) = 1.
	\end{equation*}
\end{proposition}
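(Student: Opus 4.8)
The plan is to show that $J_t$ is, up to a deterministic prefactor, a sum of a fractional Brownian motion term plus a Lebesgue integral of $B^H$, and that both pieces have fluctuations that grow too slowly to prevent $J_t$ from oscillating unboundedly when $a<0$. Concretely, recall from \eqref{eq:int} that
\begin{equation*}
    J_t = e^{-at} B_t^H + a\int_0^t e^{-as} B_s^H\,ds,
\end{equation*}
and since $a<0$ we have $e^{-at}\to+\infty$. The idea is to write $J_t = e^{-at}\bigl(B_t^H + a\,e^{at}\int_0^t e^{-as} B_s^H\,ds\bigr)$ and argue that the bracketed factor does not go to $-\infty$ along a suitable sequence $t_n\to\infty$; since $e^{-at_n}\to+\infty$, this forces $\limsup_n J_{t_n}=+\infty$.

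The first step is to control the Lebesgue-integral term. Using the standard almost-sure bound $\abs{B_s^H}\le C(\omega)(1+s)^{H+\varepsilon}$ (valid for any $\varepsilon>0$ by the law of the iterated logarithm or by Gaussian maximal inequalities), one gets
\begin{equation*}
    \abs[\Big]{a\,e^{at}\int_0^t e^{-as} B_s^H\,ds} \le C(\omega)\abs{a} e^{at}\int_0^t e^{-as}(1+s)^{H+\varepsilon}\,ds,
\end{equation*}
and an integration-by-parts (or Watson's lemma type) estimate shows the right-hand side is $O\bigl((1+t)^{H+\varepsilon}\bigr)$ as $t\to\infty$. So the bracket equals $B_t^H + O\bigl((1+t)^{H+\varepsilon}\bigr)$, i.e. it is dominated in size by a term of order $t^{H+\varepsilon}$.

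The second step is to exploit the oscillation of fractional Brownian motion itself: by the law of the iterated logarithm for fBm, $\limsup_{t\to\infty} B_t^H / (t^H \sqrt{\log\log t}) = c_H > 0$ almost surely, so there is a random sequence $t_n\to\infty$ with $B_{t_n}^H \ge \tfrac12 c_H\, t_n^H\sqrt{\log\log t_n}$. Choosing $\varepsilon$ small enough that $H+\varepsilon$ stays below, say, $H+\tfrac12$ and comparing growth rates (the $\sqrt{\log\log t_n}$ factor beats the fixed polynomial gap only if we instead use a cleaner bound) — more robustly, one avoids the LIL altogether and simply uses that $\sup_{s\le t} B_s^H$ and $\inf_{s\le t} B_s^H$ are a.s.\ of exact order $t^H$ up to logarithmic corrections while their ratio oscillates; the cleanest route is: pick the sequence $t_n$ along which $B_{t_n}^H \ge n\cdot t_n^{H}$ fails (it does, a.s.), so instead fix the statement at the level of $\limsup$. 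The key inequality to record is that on the event $\{B_{t_n}^H \ge \delta\, t_n^H\}$ (which has probability bounded below uniformly in $n$, and in fact occurs for infinitely many $n$ a.s.\ by Borel–Cantelli applied to a suitable independent-increment decomposition of $B^H$ along a geometric sequence), we get for the bracket
\begin{equation*}
    B_{t_n}^H + O\bigl(t_n^{H+\varepsilon}\bigr) \ge \delta\, t_n^H - C(\omega) t_n^{H+\varepsilon},
\end{equation*}
which is \emph{not} what we want directly. Therefore the correct normalisation is to note $O(t^{H+\varepsilon})$ should really be $O(t^H)$: redo the first step with $\varepsilon=0$ but a $\log$ correction, giving $a e^{at}\int_0^t e^{-as}B_s^H\,ds = -B_t^H + o(t^H\sqrt{\log\log t})$ after one further integration by parts that extracts the leading $-B_t^H$ cancellation — in fact integrating $a\int_0^t e^{-as}B_s^H ds$ by parts against $B^H$ returns $e^{-at}B_t^H$ minus $\int_0^t e^{-as}dB_s^H$, which is circular, so the honest statement is simply that the bracket is a continuous Gaussian process whose variance one computes explicitly (using Section~\ref{sec4}) to grow like $t^{2H}$, hence by the Gaussian LIL its $\limsup$ is $+\infty$ a.s.

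The main obstacle, and the part to be careful about, is precisely this last point: showing the bracketed Gaussian process $Z_t := B_t^H + a e^{at}\int_0^t e^{-as}B_s^H\,ds$ has $\limsup_{t\to\infty} Z_t = +\infty$ a.s. The clean approach is: $Z_t$ is centered Gaussian; compute $\var(Z_t)$ and show $\var(Z_t)\to\infty$ (this reduces to a covariance computation of the type carried out in Section~\ref{sec4}, since $Z_t = e^{at}J_t$ and $\var(J_t) = e^{2at}(\ldots)$); then use a general fact that a centered Gaussian process with $\sup_t \var(Z_t)=\infty$ has $\limsup_t Z_t = +\infty$ a.s.\ on the relevant tail $\sigma$-algebra (this follows from a Borel–Cantelli argument along a sequence $t_n$ chosen so that the increments $Z_{t_{n+1}}-(\text{projection onto }Z_{t_n})$ are nearly independent with variances summing to $\infty$, together with the Gaussian tail lower bound $\P(N(0,v)>\lambda)\ge c\lambda^{-1}v^{1/2}e^{-\lambda^2/(2v)}$). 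Finally, multiplying by $e^{-at_n}\to+\infty$ upgrades $\limsup Z_{t_n}>0$ to $\limsup J_{t_n}=+\infty$, which is the claim; and by the symmetry of centered Gaussian processes the same holds for $-J_t$, so in fact $J_t$ oscillates between $\pm\infty$.
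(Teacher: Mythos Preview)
Your proposal has a genuine gap at the point you yourself call the ``honest statement''. You set $Z_t = e^{at}J_t = B_t^H + a e^{at}\int_0^t e^{-as}B_s^H\,ds$, intend to show $\var(Z_t)\to\infty$, and from that deduce $\limsup_t Z_t = +\infty$ a.s. But $\var(Z_t)$ does \emph{not} diverge: $Z_t$ is exactly the fractional Ornstein--Uhlenbeck process \eqref{fOU} with $y_0=0$, $\sigma=1$, and for $a<0$ one has (see the remark after Proposition~\ref{prop1})
\[
    \var Z_t = H\int_0^t z^{2H-1}\bigl(e^{az}+e^{2at-az}\bigr)\,dz \xrightarrow[t\to\infty]{} H\int_0^\infty z^{2H-1}e^{az}\,dz = \frac{H\Gamma(2H)}{(-a)^{2H}}<\infty.
\]
Thus your Borel--Cantelli/Gaussian-tail scheme, which needs variances going to infinity, cannot be run on $Z_t$. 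The earlier attempts in your write-up (LIL for $B^H$ versus an $O(t^{H+\varepsilon})$ or $O(t^H\sqrt{\log\log t})$ bound on the integral term) you correctly diagnose as either too weak or circular, so they do not rescue the argument.

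What you actually need is $\limsup_t Z_t > 0$ a.s.\ (then $J_t=e^{-at}Z_t$ with $e^{-at}\to+\infty$ finishes it), and this cannot be read off from variance growth; it requires some recurrence/ergodicity input. The paper's proof supplies precisely that: it passes to the two-sided stationary fractional OU process $G_t = e^{at}\int_{-\infty}^t e^{-as}\,dB_s^H$, uses its ergodicity to conclude via Birkhoff that $\{G_k>x\}$ occurs for infinitely many integers $k$ a.s.\ for every $x$, hence $\limsup_t G_t=+\infty$ a.s., and then writes $J_t = e^{-at}G_t - G_0$ to conclude. If you want to repair your approach, the missing ingredient is exactly this: either invoke ergodicity of the stationary version, or prove directly (e.g.\ via a quantitative decorrelation estimate along a sparse time grid and second Borel--Cantelli) that the bounded-variance Gaussian process $Z_t$ exceeds any fixed level infinitely often.
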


\begin{proof}
It is known \cite{CKM}, that for $a<0$ the process
\begin{equation*}
	G_t = e^{a t} \int_{-\infty}^t e^{-a s}\,dB^H_s
\end{equation*}
is Gaussian, stationary, and ergodic. By the ergodic theorem, for any $x\in\R$,
\begin{equation*}
    \frac1n \sum_{k=1}^n \ind_{\set{G_k>x}}
    \to\E\ind_{\set{G_0>x}} = \P(G_0>x)>0
    \quad \text{a.\,s., }n\to\infty.
\end{equation*}
Therefore,
\begin{equation*}
	\sum_{k=1}^\infty \ind_{\set{G_k>x}} = +\infty
    \quad\text{a.\,s.}
\end{equation*}
This means that the event $\set{G_k>x}$ happens infinitely often. Hence,
\begin{equation*}
	\limsup_{t\to\infty}G_t = +\infty \quad\text{a.\,s.}
\end{equation*}
Then
\begin{equation*}
    \limsup_{t\to\infty} J_t
    = \limsup_{t\to\infty} \left(e^{-a t}G_t-G_0\right)
	= +\infty \quad\text{a.\,s.}
    \qedhere
\end{equation*}
\end{proof}

\begin{remark}\label{rem:a=0}
Proposition \ref{prop0} remains true in case $a=0$, as $J_t=B^H_t$.
More detailed results on the behavior of the supremum and the probabilities of level hitting for the fBm can be found, for example, in \cite{decr-nual,lei-nual,Molchan2000,Nourdin-book}.
\end{remark}

\subsection*{The case $a>0$}

According to Corollary~\ref{cor:1},
\[
    V_{t}^{2} \coloneqq \var J_t = H\int_{0}^{t}z^{2H-1} \left(e^{-2a t + a z} + e^{-a z}\right) dz.
\]
The derivative of $V_{t}^{2}$ is equal to
\[
    \frac{d}{dt}V_{t}^{2} = 2H\left(t^{2H-1}e^{-a t} - ae^{-2a t} \int_{0}^{t}z^{2H-1}e^{a z}\,dz\right)  .
\]

Since the second summand under brackets is exponentially smaller than the first one, there exists $t(a)$ such that the derivative is positive for all $t\geq t(a)$.
Notice that
\[
    \lim_{t\rightarrow\infty}V_{t}^{2} = H\int_{0}^{\infty}z^{2H-1}e^{-a z}dz = \frac{H\Gamma(2H)}{a^{2H}}.
\]	

Introduce Gaussian process
\[
	Z_{t} = J_{t/(1-t)}, \quad t\in[0,1],
\]
defining $Z_{1}=J_{\infty}$.
We have for the derivative of its variance $v_{t}^{2}$
\begin{align}
    \frac{d}{dt}v_{t}^{2} &=\left.  \frac{d}{ds}V_{s}^{2}\right\vert_{s=t/(1-t)} \left(  \frac{t}{1-t}\right)'\nonumber\\
    &=2H\frac{\bigl(t/(1-t)\bigr)^{2H-1}e^{-a t/(1-t)} - a e^{-2a t/(1-t)} \int_{0}^{t/(1-t)} z^{2H-1} e^{a z}\,dz}{(1-t)^{2}}.
    \label{derivative variance}
\end{align}

It exists and tends to zero as $t\rightarrow1$.

Since the exponential multipliers, the second derivative also tends to zero as $t\rightarrow1$.
Now let us use the form of covariance function from Corollary~\ref{cor:1} (we mean everywhere that $s<t$).
We have after reducing similar terms:
\begin{align*}
    \E(J_{t}-J_{s})^{2} &= V_{t}^{2}+V_{s}^{2}-2\cov(J_s,J_t)
	\\
    &= He^{-2a t} \int_{0}^{t}z^{2H-1}e^{-az}dz + H\int_{0}^{t} z^{2H-1}e^{-a z}dz
    +He^{-2a s}\int_{0}^{s}z^{2H-1}e^{a z}dz
    \\
    &\quad+H\int_{0}^{s}z^{2H-1}e^{-a z}dz
    +He^{-2a s}\int_{0}^{t-s}z^{2H-1}e^{-a z}dz
    \\
    &\quad-He^{-2a t}\int_{t-s}^{t}z^{2H-1}e^{a z}dz
	+H\int_{s}^{t}z^{2H-1}e^{-a z}dz
    \\
    &\quad-He^{-2a s}\int_{0}^{s}z^{2H-1}e^{a z}dz
    -2H\int_{0}^{t}z^{2H-1}e^{-a z}dz
	\\
    &=He^{-2a t}\int_{0}^{t-s}z^{2H-1}e^{a z}dz + He^{-2a s}\int_{0}^{t-s}z^{2H-1}e^{-a z}dz.
\end{align*}
For $Z_{t}$:
\begin{align*}
    \E(Z_{t}-Z_{s})^{2} &= He^{-2a t/(1-t)} \int_{0}^{(t-s)/(1-t)(1-s)} z^{2H-1} e^{a z}\,dz
	\\
    &\quad+He^{-2a s/(1-s)} \int_{0}^{(t-s)/(1-t)(1-s)} z^{2H-1} e^{-a z}\,dz.
\end{align*}
Further, for any $s\in[0,1)$
\begin{align*}
    \frac{t-s}{(1-t)(1-s)} &= \frac{t-s}{(1-s)^{2}} + \frac{(t-s)^{2}}{(1-t)(1-s)};
    \\
    e^{-2a t/(1-t)} &= e^{-2a s/(1-s)} e^{-2a(t-s)/(1-t)(1-s)}
	\\*
    &= e^{-2a s/(1-s)} \left(1 + \frac{-2a(t-s)}{(1-s)^{2}} + \frac{-4a(t-s)^{2}}{(1-s)^{2}} + \frac{O\left((t-s)^{3}\right)}{(1-s)^{2}}\right)
\end{align*}
as $t\downarrow s$.
Moreover,
\[
    \int_{0}^{(t-s)/(1-t)(1-s)}z^{2H-1}e^{\pm a z}dz = \frac{1}{2H}\frac{(t-s)^{2H} + O\left((t-s)^{2H+1}\right)}{(1-s)^{2H}}
\]
as $t\downarrow s$.
Hence	
\begin{equation}
    \E(Z_{t}-Z_{s})^{2} = He^{-2a s/(1-s)}\left(  \frac{(t-s)^{2H}}{H(1-s)^{2H}} + \frac{O\left((t-s)^{2H+1}\right)}{(1-s)^{2H}}\right)
    \label{natural metrics}
\end{equation}
as $t\downarrow s$.
By this way it can also be obtained that for any $s\in[0,1]$,
\begin{equation}
    \limsup_{t-s\downarrow0} \frac{\E(Z_{t}-Z_{s})^{2}}{(t-s)^{2H}} \leq H\max_{s\in[0,1]}(1-s)^{-2}e^{-2a s/(1-s)}.
    \label{uniform estim}
\end{equation}
From \eqref{derivative variance} and \eqref{uniform estim} it follows that
\begin{equation}
    \P\left(\sup_{t\geq0}J_{t}<\infty\right) = \P\left(\max_{t\in[0,1]}Z_{t}<\infty\right) = 1.
    \label{a.s. bound}
\end{equation}
Further, from Theorem D.4 of \cite{Pit} and (\ref{derivative variance},\ref{uniform estim}) we obtain the following result.
\begin{proposition}\label{prop2}
    There exists a constant $C$ such that for any $x>0$%
	\begin{equation}
	\label{eq:probab}
        \P\left(\sup_{t\geq0}J_{t}\geq x\right) =\P\left(\max_{t\in[0,1]}Z_{t}\geq x\right)
        \leq C x^{\frac{1}{H}-1} \exp\left(-\frac{x^{2}}{2v^{2}}\right),
	\end{equation}
    with
    $v^{2} = \sup_{t\geq0}V_{t}^{2} = \max_{t\in[0,1]}v_{t}^{2}<\infty$.		
\end{proposition}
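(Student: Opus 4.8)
The plan is to obtain \eqref{eq:probab} by applying the general theory of the distribution of the maximum of a Gaussian process over a compact parameter set to the time-changed process $Z$. First I would check that $Z$ is a genuine, a.s.\ continuous Gaussian process on the \emph{closed} interval $[0,1]$, so that $\max_{t\in[0,1]}Z_t$ is well defined and equals $\sup_{t\ge0}J_t$. The map $t\mapsto t/(1-t)$ is an increasing bijection of $[0,1)$ onto $[0,\infty)$, and for $a>0$ the limit $J_\infty:=a\int_0^\infty e^{-as}B^H_s\,ds$ exists: since $B^H$ grows more slowly than any exponential, $e^{-at}B^H_t\to0$ a.s.\ and $\int_0^\infty e^{-as}\abs{B^H_s}\,ds<\infty$ a.s., so by \eqref{eq:int} $J_t\to J_\infty$ both a.s.\ and, using boundedness of $V_t^2$, in $L^2$. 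Setting $Z_1:=J_\infty$ thus gives $\sup_{t\ge0}J_t=\max_{t\in[0,1]}Z_t$ and $\var Z_1=\lim_{t\to\infty}V_t^2$, while a.s.\ continuity of $Z$ on $[0,1]$ follows from the incremental estimate below via the Kolmogorov--Chentsov criterion ($\E\abs{Z_t-Z_s}^p\le c_p\bigl(\E(Z_t-Z_s)^2\bigr)^{p/2}$ for Gaussian increments).

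Next I would assemble the two structural inputs to the tail bound. \emph{(i) The maximal variance.} By Corollary~\ref{cor:1}, $V_t^2=\var J_t$ is continuous on $[0,\infty)$, is eventually increasing (its derivative is positive for $t\ge t(a)$), and satisfies $V_t^2\to H\Gamma(2H)a^{-2H}$ as $t\to\infty$; hence $v^2:=\sup_{t\ge0}V_t^2=\max_{t\in[0,1]}v_t^2<\infty$, and by \eqref{derivative variance} the map $t\mapsto v_t^2$ is continuously differentiable on $[0,1]$ with vanishing derivative at $t=1$, so the variance is flat at the point where it is largest. \emph{(ii) The natural metric.} From the explicit covariance in Corollary~\ref{cor:1} one gets, after cancellations, $\E(J_t-J_s)^2=He^{-2at}\int_0^{t-s}z^{2H-1}e^{az}\,dz+He^{-2as}\int_0^{t-s}z^{2H-1}e^{-az}\,dz$ for $s<t$; transporting this through the time change gives \eqref{natural metrics}, and the decisive observation is that the prefactor $(1-s)^{-2H}e^{-2as/(1-s)}$ remains \emph{bounded} on $[0,1]$, because for $a>0$ the exponential decay as $s\to1$ beats the polynomial blow-up of the time change. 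This yields the uniform H\"older bound $\E(Z_t-Z_s)^2\le K\abs{t-s}^{2H}$ on $[0,1]^2$, i.e.\ \eqref{uniform estim}, and in particular, via Dudley's bound, the a.s.\ finiteness \eqref{a.s. bound}.

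Having $Z$ a.s.\ continuous on a compact interval, with maximal variance $v^2<\infty$ and a uniform increment bound of order $\abs{t-s}^{2H}$, I would invoke Theorem~D.4 of \cite{Pit}: for a centered Gaussian process on a compact interval satisfying $\E(Z_t-Z_s)^2\le K\abs{t-s}^{\alpha}$ with $\alpha\in(0,2]$ and $\max_t\var Z_t=v^2$, one has $\P(\max_t Z_t\ge x)\le Cx^{2/\alpha-1}\exp(-x^2/(2v^2))$ for all $x>0$; with $\alpha=2H$ this is exactly \eqref{eq:probab}. For a self-contained argument the same exponent is visible by discretizing $[0,1]$ into $\sim x^{1/H}$ subintervals of length $\sim x^{-1/H}$ (so the $L^2$-increment across each is $O(1/x)$), bounding the maximum over the grid by a union bound with the Gaussian tail $\P(Z_{t_k}\ge x)\le\frac{v}{x\sqrt{2\pi}}e^{-x^2/(2v^2)}$, which produces the factor $x^{1/H}\cdot x^{-1}=x^{1/H-1}$, and controlling the within-interval oscillation by a Borell--TIS/chaining estimate that contributes only a lower-order correction.

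The main obstacle is step (ii): establishing the uniform H\"older control of the natural metric of $Z$ on the \emph{closed} interval $[0,1]$. The time change $t\mapsto t/(1-t)$ is singular at $t=1$, and a priori the increments of $Z$ could blow up there; the argument is rescued only by the precise interplay, available exclusively for $a>0$, between the exponential factor $e^{-2as/(1-s)}$ coming from the Ornstein--Uhlenbeck drift and the polynomial factor $(1-s)^{-2H}$ coming from the time change. For $a\le0$ this fails, in agreement with Proposition~\ref{prop0}. Once the uniform estimate is in place, the continuity of $Z$, the finiteness of $v^2$, and the invocation of Theorem~D.4 are all routine.
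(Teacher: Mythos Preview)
Your proposal is correct and follows essentially the same route as the paper: verify that $Z$ extends continuously to $[0,1]$, establish the uniform H\"older control \eqref{uniform estim} on the increments (exploiting that for $a>0$ the factor $e^{-2as/(1-s)}$ dominates the time-change singularity), note that $v^2<\infty$ with the variance flat at the maximizer via \eqref{derivative variance}, and then invoke Theorem~D.4 of \cite{Pit} with $\alpha=2H$. The paper records \eqref{derivative variance}--\eqref{a.s. bound} as preliminary computations and then states the proposition as an immediate consequence of Theorem~D.4, so your write-up is simply a more detailed rendering of the same argument; the discretization/chaining sketch you add is extra but harmless.
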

	
Moreover, since $v_{t}$ is twice differentiable, using \eqref{uniform estim} we obtain that for some $c>0$ and for any $t-s$, that is small enough,
\[
\mathrm{Cov}(Z_{s}/v_{s},Z_{t}/v_{t})\geq1-c|t-s|^{2H}.
\]

Now we can use the Slepyan theorem (see \cite{Pit})
to bound the probability \eqref{eq:probab} above by the corresponding probability for the process $v_{t}U_{t}$, where $U_{t}$ is a stationary Gaussian process with zero mean and covariance function, which behavior near zero is $1-|t|^{2H}+o(|t|^{2H})$.
Then, using Theorem D.4 from \cite{Pit}, we get

\begin{proposition}\label{prop3}
    There exists a constant $C_{1}$ such that for an arbitrary $x>0$
    \[
        \P\left(\sup_{t\geq0}J_{t}\geq x\right)
        \leq C_{1} x^{\frac{1}{H}-2} \exp\left(-\frac{x^{2}}{2v^{2}}\right).
	\]
\end{proposition}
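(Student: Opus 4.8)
The plan is to sharpen the exponent of Proposition~\ref{prop2} from $\frac1H-1$ to $\frac1H-2$ by using the \emph{local} covariance structure of the time-changed process $Z$, not merely the modulus-of-continuity bound \eqref{uniform estim}. Concretely, I would (i) upgrade the $L^2$-increment estimate to a two-sided control of the normalized correlation near the diagonal, (ii) use the Slepian comparison inequality to dominate $\P(\max_{[0,1]}Z_t\ge x)$ by the same quantity for a modulated stationary Gaussian process $v_tU_t$, and (iii) invoke the precise tail bound for such processes (Theorem~D.4 of \cite{Pit}). This is, in effect, the argument sketched in the paragraph preceding the statement, carried out in that order.

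Step (i): from \eqref{natural metrics} one reads off $\E(Z_t-Z_s)^2\sim(1-s)^{-2H}e^{-2as/(1-s)}(t-s)^{2H}$ as $t\downarrow s$; since $v_t^2=\var Z_t$ is $C^2$ on $[0,1]$ (its first and second derivatives are the ones computed in and around \eqref{derivative variance} and tend to $0$ as $t\to1$) and is bounded below by a positive constant, one gets $1-\cov(Z_s/v_s,Z_t/v_t)=\tfrac12\E\big((Z_s/v_s)-(Z_t/v_t)\big)^2\le c\abs{t-s}^{2H}$ for $\abs{t-s}$ small, with some $c>0$ — exactly the estimate stated just before the proposition. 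Step (ii): take a centered stationary Gaussian process $U=\set{U_t}$ with $\var U_t\equiv1$ and covariance $r(\tau)=1-\abs{\tau}^{2H}+o(\abs{\tau}^{2H})$ as $\tau\to0$, chosen so that $v_sv_t\,r(t-s)\le\cov(Z_s,Z_t)$ for all $s,t\in[0,1]$ (a sufficiently large local constant in $r$ together with fast enough decay, or the localization to a neighborhood of the maximizer of $v_t$ that is built into the double-sum method, legitimizes this comparison), while $\var(v_tU_t)=v_t^2=\var Z_t$. The Slepian theorem then gives $\P(\max_{[0,1]}Z_t\ge x)\le\P(\max_{[0,1]}v_tU_t\ge x)$ for every $x>0$.

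Step (iii): the process $v_tU_t$ is precisely a smooth deterministic modulation of a stationary Gaussian process with local index $2H$, so Theorem~D.4 of \cite{Pit} applies with maximal variance $v^2=\sup_{t\ge0}V_t^2=\max_{t\in[0,1]}v_t^2$ and yields $\P(\max_{[0,1]}v_tU_t\ge x)\le C_1 x^{\frac1H-2}\exp(-x^2/(2v^2))$; combined with \eqref{a.s. bound}-type identifications of the suprema this is the claim. The heuristic for the extra factor $x^{-1}$ over Proposition~\ref{prop2}: at level $x$ the maximum of $v_tU_t$ is effectively confined to an $O(x^{-1})$-window around the argmax of $v_t$ (parabolic profile), and integrating the local Pickands-type contribution — of order $x^{1/H}$ times the Gaussian tail $\asymp x^{-1}\exp(-x^2/(2v^2))$ per unit length — over this shrinking window produces the gained factor $x^{-1}$, i.e.\ order $x^{\frac1H-2}\exp(-x^2/(2v^2))$.

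The main obstacle is verifying the hypotheses of Theorem~D.4, and in particular pinning down the geometry of $v_t^2$ near its maximum: whether $v^2=\sup_{t\ge0}V_t^2$ is attained at an interior point with a nondegenerate (parabolic) profile — which is what produces the $O(x^{-1})$ localization and hence the clean exponent $\frac1H-2$ — or only in the limit at $t=1$, where $\frac{d}{dt}v_t^2$ and $\frac{d^2}{dt^2}v_t^2$ both vanish. Settling this requires the sign and asymptotic analysis of $\frac{d}{dt}V_t^2$ and $\frac{d^2}{dt^2}V_t^2$ from \eqref{derivative variance} together with $\lim_{t\to\infty}V_t^2=H\Gamma(2H)/a^{2H}$. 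A secondary and more routine point is making the Slepian comparison valid on all of $[0,1]^2$ rather than only near the diagonal, which is handled either by a careful choice of the comparison covariance or by the localization already present in \cite{Pit}.
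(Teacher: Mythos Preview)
Your proposal follows exactly the paper's route: the correlation lower bound $\cov(Z_s/v_s,Z_t/v_t)\ge 1-c\abs{t-s}^{2H}$ from \eqref{uniform estim} and the $C^2$-regularity of $v_t$, then Slepian comparison with $v_tU_t$ for a stationary $U$ with local index $2H$, then Theorem~D.4 of \cite{Pit}. Your ``main obstacle'' is well spotted and is precisely the point the paper flags in the remarks following the proposition --- the maximum of $v_t^2$ is attained at $t=1$ where all derivatives vanish, so the standard parabolic localization does not apply verbatim; the paper concedes this and appeals to the methods behind the theorem rather than to a direct hypothesis check.
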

		
\begin{remark}
    It is easy to verify that $\max_{t\in[0,1]}v_{t}^{2} = v_{1}^{2} =  V_\infty^2=\frac{H\Gamma(2H)}{a^{2H}}$.
    Indeed, from \eqref{derivative variance} it follows that the point $t=1$ is a local maximum point.
    With $t=0$ we obtain that $v^2_0=V^2_0=0$. Therefore, it is enough to show that the function $v^2_t$ has no local extremum  at interior points of the interval $[0,1]$.
    If such points existed, then, according to \eqref{derivative variance}, they would satisfy the equation
    \[
        \left(\frac{t}{1-t}\right)^{2H-1}e^{-a t/(1-t)}-a e^{-2a t/(1-t)} \int_{0}^{t/(1-t)} z^{2H-1} e^{a z}\,dz = 0, \quad t\in(0,1).
     \]
    Denote $s=\frac{t}{1-t}$, then this equation takes the following form:
    \begin{equation}\label{eq:extrem}
        e^{-2a s}\left(s^{2H-1}e^{a s} - a \int_{0}^{s} z^{2H-1} e^{a z}\,dz\right) = 0, \quad s>0.
     \end{equation}
     Let us investigate the behavior of the function
     \[
        h(s)=s^{2H-1}e^{a s}-a \int_{0}^{s} z^{2H-1} e^{a z}\,dz
     \]
     with $s>0$.
     First, note that with $H=1/2$ the function $h(s)\equiv1$, so the equation \eqref{eq:extrem} has no roots.
     For $H\ne1/2$ let us find the derivative:
     \[
        \frac{d}{dt}\,h(t) = (2H-1) s^{2H-2} e^{a s}.
     \]

     If $H<1/2$, then the function $h$ is strictly decreasing, therefore (as $a>0$) the left-hand side of \eqref{eq:extrem} is also strictly decreasing and tends to zero as $s\to\infty$. Thus, the equation \eqref{eq:extrem} has no roots on $(0,+\infty)$.

     If $H>1/2$, then $h$ is strictly increasing, and $h(0)=0$, so, $h(s)>0$ if $s>0$, and the equation \eqref{eq:extrem} either has no roots on $(0,+\infty)$.
\end{remark}
	
\begin{remark}
For $1>t>s$ from \eqref{natural metrics} and \eqref{uniform estim} it follows that $\E(Z_{t}-Z_{s})^{2}$ exponentially tends to zero as
$s\to1$. It can be shown that all derivatives with respect to $s$ of this expectation also tend to zero as $s\to1$.
The same is true for $v_{s}^{2}$.
It means that Theorem D.3 from \cite{Pit} cannot be used directly to obtain the asymptotic behavior of  $\P(\sup_{t\geq0}Z_{t}\geq x)$ as $x\rightarrow\infty$. However, this asymptotic behavior can be found by the methods used to prove the mentioned theorem.
\end{remark}

\medskip

Let us return to the question of the first zero hitting moment $\tau$ of the fOU process~\eqref{fOU}.
From the considerations given at the beginning of this section, propositions \ref{prop0}, \ref{prop3}, and Remark ~\ref{rem:a=0}, we obtain the following result.

\begin{theorem}
    \begin{enumerate}
    \item
        If $a\le0$, then $\P(\tau<\infty)=1$.
    \item
        If $a>0$, then $\P(\tau<\infty)\in(0,1)$, and we have the upper bound
        \[
            \P(\tau<\infty) \le C_{1} \left(\frac{Y_0}{\sigma}\right)^{\frac{1}{H}-2}  \exp\left(-\frac{a^{2H} Y_0^2}{\sigma^2\Gamma(2H+1)}\right),
        \]
        where $C_1>0$ is a constant.
    \end{enumerate}
\end{theorem}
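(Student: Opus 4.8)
The plan is to read off both statements from the explicit representation \eqref{eq:fou-explicit} of the fractional Ornstein--Uhlenbeck process together with Propositions \ref{prop0} and \ref{prop3} and Remark \ref{rem:a=0}, so the whole argument is essentially a corollary of the material already developed in this section.

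First I would reduce the problem to a statement about $\sup_{t\ge0}J_t$. By \eqref{eq:fou-explicit} we have $Y_t = e^{at}\bigl(Y_0+\sigma J_t\bigr)$ with $J_t$ given by \eqref{eq:int}; since $e^{at}>0$, $Y_0=\sqrt{X_0}>0$ and $\sigma>0$, the continuous path $t\mapsto Y_t$ vanishes at some finite time if and only if $t\mapsto J_t$ attains the level $-Y_0/\sigma$, which (as $J$ is continuous with $J_0=0>-Y_0/\sigma$) happens exactly when $\inf_{t\ge0}J_t\le -Y_0/\sigma$. Writing $\widetilde J_t=-J_t$ and using that $-B^H\stackrel{d}{=}B^H$, hence $\widetilde J\stackrel{d}{=}J$ as processes, we get $\inf_{t\ge0}J_t=-\sup_{t\ge0}\widetilde J_t$ and therefore
\[
    \P(\tau<\infty) = \P\Bigl(\inf_{t\ge0}J_t\le -\tfrac{Y_0}{\sigma}\Bigr) = \P\Bigl(\sup_{t\ge0}J_t\ge \tfrac{Y_0}{\sigma}\Bigr).
\]

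For $a\le0$ I would invoke Proposition \ref{prop0} (and Remark \ref{rem:a=0} in the case $a=0$): it gives $\limsup_{t\to\infty}J_t=+\infty$ a.s., hence $\sup_{t\ge0}J_t=+\infty$ a.s., so the probability above equals $1$. For $a>0$ the lower bound $\P(\tau<\infty)>0$ follows from $\P\bigl(\sup_{t\ge0}J_t\ge Y_0/\sigma\bigr)\ge\P\bigl(J_1\ge Y_0/\sigma\bigr)>0$, since $J_1$ is a nondegenerate centered Gaussian variable ($V_1^2>0$ by the formula in Corollary~\ref{cor:1}). For the strict upper bound $\P(\tau<\infty)<1$ I would combine \eqref{a.s. bound}, which states that $\sup_{t\ge0}J_t$ is a.s.\ finite, with the positivity of small-ball probabilities of continuous Gaussian processes (equivalently, $0$ belongs to the support of the law of the continuous Gaussian process $(J_t)_{t\ge0}$, realised on $[0,1]$ through the time change $Z_t=J_{t/(1-t)}$): this gives $\P\bigl(\sup_{t\ge0}J_t<Y_0/\sigma\bigr)\ge\P\bigl(\sup_{t\ge0}\abs{J_t}<Y_0/\sigma\bigr)>0$, hence $\P(\tau<\infty)<1$. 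Finally the quantitative bound is obtained by applying Proposition~\ref{prop3} at $x=Y_0/\sigma$ with $v^2=\tfrac{H\Gamma(2H)}{a^{2H}}$ and rewriting $2v^2=\tfrac{2H\Gamma(2H)}{a^{2H}}=\tfrac{\Gamma(2H+1)}{a^{2H}}$ via $\Gamma(2H+1)=2H\,\Gamma(2H)$, which turns $\exp\bigl(-x^2/(2v^2)\bigr)$ into $\exp\bigl(-a^{2H}Y_0^2/(\sigma^2\Gamma(2H+1))\bigr)$ and $x^{1/H-2}$ into $(Y_0/\sigma)^{1/H-2}$.

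Since all the substantial analytic work (ergodicity of the stationary version for $a<0$, the variance and increment estimates \eqref{derivative variance} and \eqref{uniform estim}, and the maximal inequality behind Proposition~\ref{prop3}) is already in place, there is no real obstacle here. The only two points that require a little care are the symmetry reduction to $\sup_{t\ge0}J_t$, which relies on $J$ being a \emph{centered} Gaussian process, and the strict inequality $\P(\tau<\infty)<1$: a.s.\ finiteness of the supremum alone does not rule out that the supremum always exceeds $Y_0/\sigma$, so one genuinely needs the small-ball/support argument for Gaussian measures to close this gap, together with the elementary Gamma-function identity for matching the constant in the exponent.
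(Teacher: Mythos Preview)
Your proposal is correct and follows essentially the same route as the paper: both reduce via \eqref{eq:fou-explicit} and the symmetry of the centered Gaussian process $J$ to the level-crossing probability $\P\bigl(\sup_{t\ge0}J_t\ge Y_0/\sigma\bigr)$, then invoke Proposition~\ref{prop0} (with Remark~\ref{rem:a=0}) for $a\le0$ and Proposition~\ref{prop3} with $v^2=H\Gamma(2H)/a^{2H}=\Gamma(2H+1)/(2a^{2H})$ for the quantitative bound when $a>0$. Your treatment is in fact more careful than the paper's on two points the paper leaves implicit: the symmetry step, which you make rigorous through $-B^H\stackrel{d}{=}B^H$ and hence $-J\stackrel{d}{=}J$ as processes, and the strict inequality $\P(\tau<\infty)<1$, which you justify via the Gaussian small-ball/support argument applied to the compactified process $Z_t=J_{t/(1-t)}$, whereas the paper simply asserts it.
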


\section{Appendix. Covariance function of the fractional Ornstein-Uhlenbeck process}
\label{sec4}

Consider the fractional Ornstein-Uhlenbeck process $Y$, which is the solution of the equation~\eqref{fOU} with the initial condition $Y_0=y_0\in\R$.
According to \eqref{eq:fou-explicit}--\eqref{eq:int}, this solution has the following form:
\begin{equation}
\label{eq: 3}
Y_{t}=y_{0}e^{a t}+a\sigma e^{a t}\int_0^te^{-a s}B_{s}^{H}ds + \sigma B_t^H,
\quad t\ge 0.
\end{equation}

\begin{proposition} \label{prop1}.
    Let $t\ge s\ge0$. Then covariance function of the fractional Ornstein\,--\,Uhlenbeck process \eqref{eq: 3} can be represented in the following form:
	\begin{equation}\label{eq:covOU}
    \begin{split}
        R_H(t,s) &= \frac{H\sigma^2}{2}\left(-e^{a t-a s}\int_0^{t-s}e^{-a z}z^{2H-1}dz + e^{-a t+a s}\int_{t-s}^{t}e^{a z}z^{2H-1}dz\right.
        \\
        &\quad-e^{a t+a s}\int_s^te^{-a z}z^{2H-1}dz + e^{a t-a s}\int_0^se^{a z}z^{2H-1}dz
        \\
        &\quad+\left.2e^{a t+a s}\int_0^te^{-a z}z^{2H-1}dz \right).
	\end{split}
    \end{equation}
\end{proposition}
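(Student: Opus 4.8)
The plan is to reduce everything to the covariance of the Gaussian integral $J_t=\int_0^t e^{-as}\,dB^H_s$ and then to evaluate that covariance using only the (known) covariance function of $B^H$. Since by \eqref{eq:fou-explicit} we have $Y_t=y_0 e^{at}+\sigma e^{at}J_t$ and the term $y_0 e^{at}$ is deterministic, it follows that $R_H(t,s)=\sigma^2 e^{a(t+s)}\cov(J_t,J_s)$, so it suffices to compute $\cov(J_t,J_s)$ for $t\ge s\ge0$.

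Next I would use the integration-by-parts representation \eqref{eq:int}, namely $J_t=e^{-at}B_t^H+a\int_0^t e^{-au}B_u^H\,du$, which is valid for all $H\in(0,1)$. Writing $K_t:=\int_0^t e^{-au}B_u^H\,du$ and expanding by bilinearity,
\begin{align*}
\cov(J_t,J_s) &= e^{-a(t+s)}\cov(B_t^H,B_s^H) + a e^{-at}\cov(B_t^H,K_s) \\
&\quad + a e^{-as}\cov(K_t,B_s^H) + a^2\cov(K_t,K_s).
\end{align*}
Each term is then expressed through $c(u,v):=\E B_u^H B_v^H=\tfrac12\bigl(u^{2H}+v^{2H}-\abs{u-v}^{2H}\bigr)$ by pulling the Lebesgue integrals out of the covariance (Fubini is justified since $\E\int_0^t\abs{e^{-au}B_u^H}\,du<\infty$): $\cov(B_t^H,K_s)=\int_0^s e^{-av}c(t,v)\,dv$, $\cov(K_t,B_s^H)=\int_0^t e^{-au}c(u,s)\,du$, and $\cov(K_t,K_s)=\int_0^t\int_0^s e^{-a(u+v)}c(u,v)\,dv\,du$.

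The bulk of the work is to evaluate these integrals. For the single integrals the range must be split where $\abs{u-s}$ changes sign (at $u=s$ inside $\cov(K_t,B_s^H)$), and every integral of the form $\int e^{\pm av}v^{2H}\,dv$ is integrated by parts to lower the exponent to $2H-1$ — precisely the power appearing in \eqref{eq:covOU}, which also explains the prefactor $H\sigma^2/2=\tfrac{\sigma^2}{2}\cdot\tfrac{2H}{2}$. For the double integral $\cov(K_t,K_s)$ I would split the rectangle $[0,t]\times[0,s]$ along the diagonal $u=v$ (using $s\le t$), so that on each part $\abs{u-v}$ has a fixed sign, then substitute $z=u-v$ or $z=v-u$ and integrate by parts once in the inner variable. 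Finally one multiplies through by $\sigma^2 e^{a(t+s)}$ and collects terms.

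The main obstacle is purely the bookkeeping: the repeated integrations by parts generate a large number of boundary terms — powers $t^{2H}$, $s^{2H}$, $(t-s)^{2H}$ times exponentials evaluated at endpoints, together with the ``bare'' fBm-covariance term $e^{-a(t+s)}c(t,s)$ — and one must verify that all of these cancel against each other across the four pieces above, leaving exactly the five integral terms in \eqref{eq:covOU}. Equal care is needed with the sign of $\abs{u-v}$ and $\abs{u-s}$ on each subregion of integration, since a sign slip there is the easiest way for the cancellation to fail.
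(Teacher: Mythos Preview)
Your plan is correct and is essentially identical to the paper's own proof: the paper also starts from the integration-by-parts form \eqref{eq:int}, expands the product $(a\sigma e^{at}K_t+\sigma B_t^H)(a\sigma e^{as}K_s+\sigma B_s^H)$ bilinearly into terms $I_1,\dots,I_{10}$, uses Fubini to insert the fBm covariance, splits at $u=s$ and along the diagonal $u=v$ exactly as you describe, and reduces every $\int e^{\pm az}z^{2H}\,dz$ by one integration by parts before collecting terms. The only cosmetic difference is that you factor out $\sigma^2 e^{a(t+s)}$ and compute $\cov(J_t,J_s)$ first, whereas the paper carries the exponentials through; the bookkeeping you flag as the main obstacle is indeed the entire content of the proof.
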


\begin{proof}
Using \eqref{eq: 3} and the form of fBm covariance function, we have
\begin{align*}
    R_H(t,s) &= \E \left[\left(Y_{t}-y_{0}e^{a t}\right) \left(Y_{s}-y_{0}e^{a s}\right)\right]
    \\
    &=\E \left[\left(a\sigma e^{a t}\int_0^te^{-a u}B_{u}^{H}du + \sigma B_t^H\right) \left(a \sigma e^{a s}\int_0^se^{-a v}B_{v}^{H}dv + \sigma B_s^H\right)\right]
    \\
    &=\frac{a\sigma^2}{2}e^{a t}\int_0^t e^{-a u } \left(u^{2H}+s^{2H}-|u-s|^{2H}\right) du
    \\
    &\quad+\frac{a\sigma^2}{2}e^{a s}\int_0^s e^{-a v } \left(v^{2H}+t^{2H}-|v-t|^{2H}\right) dv + \frac{\sigma^2}{2}\left(t^{2H}+s^{2H}-|t-s|^{2H}\right)
    \\
    &\quad+\frac{a^2\sigma^2}{2}e^{a t + a s}\int_0^t\int_0^s e^{-a u - a v} \left(u^{2H}+v^{2H}-|u-v|^{2H}\right) du\,dv
    = \frac{\sigma^2}{2} \sum\limits_{n=1}^{10} I_{n},
\end{align*}
where
\begin{gather*}
    I_1 = ae^{a t}\int_0^te^{-a u}s^{2H}du,
    \quad
    I_2=ae^{a t}\int_0^te^{-a u}u^{2H}du,
    \quad
    I_3 = -ae^{a t}\int_0^te^{-a u}|u-s|^{2H}du,
    \\
    I_4 =  ae^{a s}\int_0^se^{-a v}t^{2H}dv,
    \quad
    I_5 = ae^{a s}\int_0^se^{-a v}v^{2H}dv,
    \quad
    I_6=-ae^{a s}\int_0^se^{-a v}(t-v)^{2H}dv,
    \\
    I_7=t^{2H}+s^{2H}-(t-s)^{2H},
    \quad
    I_8 = a^2e^{a t +a s} \int_0^te^{-a v}dv\int_0^se^{-a u}u^{2H}du,
    \\
    I_9=a^2 e^{a t +a s}\int_0^se^{-a u}du\int_0^te^{-a v}v^{2H}dv,
    \quad
    I_{10} = -a^2e^{a t + a s}\!\int_0^t\!\!\int_0^se^{-a u - a v}|u-v|^{2H}dudv.
\end{gather*}

The first two integrals are equal to
\[
    I_1 = s^{2H}\left(e^{a t}-1\right)
    \quad\text{and}\quad
    I_2 = -e^{a t}\int_0^t u^{2H} de^{-a u}
	= -t^{2H}+2He^{a t}\int_0^te^{-a u}u^{2H-1}du.
\]

Substituting the variables and integrating by parts we obtain
\begin{align*}
    I_3 &= -ae^{a t}\int_0^se^{-a u}(s-u)^{2H}du
	-ae^{a t}\int_s^te^{-a u}(u-s)^{2H}du
    \\
    &=-ae^{a t-a s}\int_0^se^{a z}z^{2H}dz
    -ae^{a t-a s}\int_0^{t-s}e^{-a z}z^{2H}dz
    \\
    &= -e^{a t-a s}\left(e^{a s}s^{2H} - 2H\int_0^se^{a z} z^{2H-1}dz - e^{-a (t-s)}(t-s)^{2H}\right.
    \\*
    &\quad+\left. 2H\int_0^{t-s}e^{-a z}z^{2H-1}dz\right)
    \\
    &=-e^{a t}s^{2H}+(t-s)^{2H} + 2He^{a t-a s}\int_0^se^{a z}z^{2H-1}dz
    - 2He^{a t-a s}\int_0^{t-s}e^{-a z}z^{2H-1}dz.
\end{align*}

Similarly, transforming $I_4$--$I_6$ we have:
\begin{gather*}
    I_4 =  t^{2H}\left(e^{a s}-1\right),
    \qquad
    I_5 = -s^{2H} + 2He^{a s}\int_0^se^{-av}v^{2H-1}dv,
    \\
    \begin{split}
    I_6&=-ae^{a s-a t}\int_{t-s}^{t}e^{a z}z^{2H}dz
    = -e^{a s-a t}\int_{t-s}^{t}z^{2H}de^{a z}
    \\
    &=-e^{a s}t^{2H} + (t-s)^{2H} + 2He^{a s - a t}\int_{t-s}^{t}e^{a z}z^{2H-1}dz.
    \end{split}
\end{gather*}

Then,
\[
    I_{8} = e^{a t+a s}\left(e^{-a t} -1\right)\int_0^s u^{2H}de^{-a u}
    =(1-e^{a t})s^{2H} - 2He^{a s}(1-e^{a t })\int_0^s e^{-a u}u^{2H-1}du,
\]
and similarly
\[
    I_{9} = (1-e^{a s})t^{2H} - 2He^{a t}(1-e^{a s }) \int_0^t e^{-a v}v^{2H-1}dv
\]

Now consider $I_{10}$.
After representing it as the sum of integrals, we have:
\begin{align*}
    I_{10} &= -a^2e^{a t + a s}\int_0^s\!\!\int_0^ve^{-a u - a v}(v-u)^{2H}dudv
    -a^2e^{a t + a s}\int_0^s\!\!\int_v^se^{-a u - a v}(u-v)^{2H}dudv
    \\
    &\quad-a^2e^{a t + a s}\int_s^t\!\!\int_0^se^{-a u - a v}(v-u)^{2H}dudv
    \\
    &= -2a^2e^{a t + a s}\int_0^s\!\!\int_0^ve^{-a u - a v}(v-u)^{2H}dudv
    -a^2e^{a t + a s}\int_s^t\!\!\int_0^se^{-a u - a v}(v-u)^{2H}dudv
    \\*
    &\eqqcolon  I_{10}'+I_{10}''.
\end{align*}

By substituting $v-u=z$, changing the order of integration, and integrating by parts, we obtain
\begin{align*}
    I_{10}' &= -2a^2e^{a t + a s}\int_0^s e^{-2a v} \int_0^ve^{a z}z^{2H}dzdv
    = -2a^2e^{a t + a s}\int_0^s e^{a z}z^{2H}\int_z^s e^{-2a v}dvdz
    \\
    &= -2a^2e^{a t + a s}\int_0^s e^{a z}z^{2H}\, \frac{e^{-2as}-e^{-2az}}{-2a}\,dz
    \\
    &= ae^{a t -a s}\left(\int_0^se^{a z}z^{2H}dz - \int_0^se^{-a u} u^{2H}du\right)
    \\
    &=e^{a t}s^{2H}
    -2He^{a t-a s}\int_0^se^{a z}z^{2H-1}dz + e^{at}s^{2H}
    - 2He^{a t +a s}\int_0^se^{-a u}u^{2H-1}du.
\end{align*}

To simplify $I_{10}''$ we need to consider two cases.

If $t>2s$, then, after substituting $v-u=z$ in the inner integral, changing the order of integration and integrating with respect to $v$, we get
\begin{align*}
    I_{10}'' &=-a^2 e^{a t + a s}\int_s^t\!\!\int_{v-s}^ve^{a z - 2a v}z^{2H}dzdv
    =-a^2e^{a t + a s}\left(\int_0^s\!\!\int_s^{z+s}e^{a z - 2a v}z^{2H}dvdz\right.
    \\*
    &\quad+\left.\int_s^{t-s}\!\!\int_z^{z+s}e^{a z - 2a v}z^{2H}dvdz+\int_{t-s}^t\!\int_z^{t}e^{a z - 2a v}z^{2H}dvdz\right)
    \\
    &=-a^2e^{a t + a s}\left(\int_0^se^{a z} z^{2H}\,\frac{e^{-2a(z+s)}-e^{-2as}}{-2a}\,dz \right.
    \\*
    &\quad+\left.\int_s^{t-s}e^{a z}z^{2H} \frac{e^{-2a(z+s)}-e^{-2az}}{-2a}\,dz
    +\int_{t-s}^te^{a z}z^{2H} \frac{e^{-2at}-e^{-2az}}{-2a}\,dz\right)
    \\
    &= \frac{a}{2}e^{a t - a s}\int_0^{t-s}e^{-a z}z^{2H}dz - \frac{a}{2}e^{a t +a s}\int_s^te^{-a z}z^{2H}dz
    - \frac{a}{2}e^{a t-a s}\int_0^se^{a z}z^{2H}dz
    \\
    &\quad+ \frac{a}{2}e^{a s - a t}\int_{t-s}^te^{a z}z^{2H}dz.
    \end{align*}

    After integrating by parts each of the four integrals and reducing similar terms, we get the following equality:
    \begin{align*}
    I_{10}'' &= -He^{a s-a t}\int_{t-s}^te^{a z}z^{2H-1}dz + He^{a t - a s}\int_0^se^{a z}z^{2H-1}dz
    - He^{a t + a s}\int_s^te^{-a z}z^{2H-1}dz
    \\
    &\quad+He^{a t - a s}\int_0^{t-s}e^{-a z}z^{2H-1}dz - e^{a t}s^{2H} + e^{a s}t^{2H}-(t-s)^{2H}.
\end{align*}
The validity of the last formula for $s<t<2s$ can be checked similarly.

Summing all the terms, we obtain \eqref{eq:covOU}.
\end{proof}

\begin{remark}
    For $s=t$ from \ref{prop1} we get the formula
    \begin{align*}
        \var Y_t &= H\sigma^2\int_0^t z^{2H-1} \left(e^{a z} + e^{2a t-a z}\right)dz,
	\end{align*}
    that was proved in \cite[Lemma~A.1]{KMR}.
\end{remark}

\begin{remark}
For arbitrary $t,s\in\R^+$ we have
\begin{align*}
    R_H(t,s) &= \frac{H\sigma^2}{2}\left(-e^{a\abs{t-s}} \int_0^{\abs{t-s}}e^{-a z}z^{2H-1}dz + e^{-a\abs{t-s}}\int_{\abs{t-s}}^{\max\set{t,s}}e^{a z}z^{2H-1}dz\right.
	\\
    &\quad-e^{a (t+s)}\int_{\min\set{t,s}}^{\max\set{t,s}}e^{-a z}z^{2H-1}dz + e^{a \abs{t-s}}\int_0^{\min\set{t,s}}e^{a z}z^{2H-1}dz
	\\
    &\quad+\left.2e^{a (t+s)}\int_0^{\max\set{t,s}} e^{-a z}z^{2H-1}dz \right).
	\end{align*}
\end{remark}

\begin{corollary}\label{alt form of fCIR cov function}
	Let $H\in(0,\frac{1}{2})\cup(\frac{1}{2},1], a<0$ and $t>0$ be fixed. Then the covariance function ot the Ornstein-Uhlenbeck process can be represented in the form
	\begin{equation}\label{eq: alt form of fCIR cov function}
	\begin{split}
	R_H(t+s,t) = &\frac{\sigma^2 H(2H-1)}{2(-a)^{2H}}\bigg(e^{as}\int_1^{-as}e^y y^{2H-2}dy+e^{-as}\int_{-as}^{+\infty}e^{-y}y^{2H-2}dy
	\\
	 &-e^{at}\bigg[e^{-a(t+s)}\int_{-a(t+s)}^{+\infty}e^{-y}y^{2H-2}dy+e^{a(t+s)}\int_1^{-a(t+s)}e^y y^{2H-2}dy\bigg]\bigg)
	\\
	&+O(e^{as}),\quad s\rightarrow\infty
	\end{split}
	\end{equation}
\end{corollary}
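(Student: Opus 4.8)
The plan is to extract the $s\to\infty$ asymptotics directly from the exact covariance formula of Proposition~\ref{prop1}. First I would apply \eqref{eq:covOU} with $(t,s)$ replaced by $(t+s,t)$ (legitimate since $t+s\ge t$ for $s\ge0$); this expresses $R_H(t+s,t)$ as $\frac{H\sigma^2}{2}$ times a sum of five terms, each of the form $e^{\alpha_i}\int_{c_i}^{d_i}e^{\pm az}z^{2H-1}\,dz$ with prefactors $e^{as}$, $e^{-as}$, or $e^{a(2t+s)}=e^{2at}e^{as}$ and limits drawn from $0,t,s,t+s$. In each integral I would substitute $y=-az$; since $a<0$ this sends positive $z$ to positive $y$ and turns $\int e^{\pm az}z^{2H-1}\,dz$ into $(-a)^{-2H}\int e^{\mp y}y^{2H-1}\,dy$ with transformed limits. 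In the two terms whose integral starts at $z=0$ and is genuinely needed (the one with prefactor $e^{as}$ over $[0,s]$ and the one with prefactor $e^{a(2t+s)}$ over $[0,t+s]$) — where, when $H<\frac12$, the factor $y^{2H-1}$ is singular at the origin — I would split $\int_0^{c}=\int_0^1+\int_1^{c}$: the piece $\int_0^1 e^{\pm y}y^{2H-1}\,dy$ is a finite constant, while on $\int_1^{c}$ integration by parts (integrating the exponential, differentiating $y^{2H-1}$) is legitimate and produces exactly the integrands $y^{2H-2}$ together with the scalar factor $2H-1$ appearing in \eqref{eq: alt form of fCIR cov function}. The remaining term over the fixed interval $[0,t]$ is simply a constant times $e^{as}$, and the two terms whose integrals start at the positive points $t$ and $s$ admit integration by parts with no splitting.

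Next I would collect the resulting pieces and sort them by order. Every $\int_0^1(\cdots)\,dy$ constant, every integration-by-parts boundary value taken at a fixed endpoint ($1$ or $-at$), and the term $e^{as}\int_0^t e^{az}z^{2H-1}\,dz$ carry a prefactor that is exponentially small (a power of $e^{as}$, or $e^{a(2t+s)}=e^{2at}e^{as}$), and are absorbed into the $O(e^{as})$ remainder. The boundary values taken at the \emph{moving} endpoints $-as$ and $-a(t+s)$ produce bare powers $s^{2H-1}$ and $(t+s)^{2H-1}$, and the crucial algebraic fact — which I expect to be the main obstacle, resting as it does on careful bookkeeping of all five terms — is that these cancel identically: the $s^{2H-1}$ contributions of the first two relabeled terms cancel each other up to $-(-a)^{-1}e^{at}(t+s)^{2H-1}$, while the third and fifth relabeled terms combine to $+(-a)^{-1}e^{at}(t+s)^{2H-1}$, so that no power of $s$ survives. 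What is left is $\frac{\sigma^2 H(2H-1)}{2(-a)^{2H}}$ times a combination of $e^{as}\int_1^{-as}e^{y}y^{2H-2}\,dy$, a finite-interval integral $e^{-as}\int_{-as}^{-a(t+s)}e^{-y}y^{2H-2}\,dy$, and $e^{a(2t+s)}\int_1^{-a(t+s)}e^{y}y^{2H-2}\,dy$, up to $O(e^{as})$.

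Finally I would recast this in the stated shape. Splitting the finite-interval integral as $e^{-as}\int_{-as}^{+\infty}e^{-y}y^{2H-2}\,dy-e^{-as}\int_{-a(t+s)}^{+\infty}e^{-y}y^{2H-2}\,dy$ (both improper integrals converge, being incomplete-Gamma-type integrals with the decaying factor $e^{-y}$), and using the elementary identities $e^{-as}=e^{at}e^{-a(t+s)}$ and $e^{a(2t+s)}=e^{at}e^{a(t+s)}$, reproduces exactly the bracketed $-e^{at}[\,\cdots\,]$ term of \eqref{eq: alt form of fCIR cov function}, while the $e^{as}\int_1^{-as}$ term matches directly; this yields \eqref{eq: alt form of fCIR cov function}. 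The hypothesis $H\neq\frac12$ enters only through the overall factor $2H-1$ introduced by the integration by parts (for $H=\frac12$ the covariance decays exponentially and the statement is vacuous), and the value $H=1$ is admissible because then $y^{2H-2}\equiv1$ and all the integrals are elementary. The only real work is the bookkeeping just described, and above all the verification that every polynomial-in-$s$ boundary term cancels, leaving precisely the claimed combination plus the exponentially small remainder.
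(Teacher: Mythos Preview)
Your proposal is correct and follows essentially the same route as the paper: start from \eqref{eq:covOU} with $(t,s)\mapsto(t+s,t)$, substitute $y=-az$, split the integrals originating at $0$ at the point $y=1$, absorb every finite-interval piece with prefactor $e^{as}$ or $e^{a(2t+s)}$ into $O(e^{as})$, then integrate by parts. The paper is much terser --- it simply writes ``Integrating by parts and reducing similar terms'' --- whereas you spell out the crucial cancellation of the boundary contributions $(-as)^{2H-1}$ and $(-a(t+s))^{2H-1}$ and the final recasting of $e^{-as}\int_{-as}^{-a(t+s)}$ via $\int_{-as}^{\infty}-\int_{-a(t+s)}^{\infty}$; these are exactly the computations hidden behind that sentence, and your bookkeeping (including the $(-a)^{-1}$ coefficients once the global factor $(-a)^{-2H}$ is pulled out) checks out.
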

\begin{proof}
	We can assume that $s$ is such that $-as>1$. From the formula \eqref{eq:covOU} after substitution $y=-az$ we get:
	\begin{equation*}
	\begin{split}
	 R_H(t+s,t)&=\frac{H\sigma^2}{2(-a)^{2H}}\bigg(-e^{as}\int_0^{1}e^{y}y^{2H-1}dy-e^{as}\int_1^{-as}e^{y}y^{2H-1}dy
	\\
	&\quad+e^{-as}\int_{-as}^{-a(t+s)} e^{-y}y^{2H-1}dy-e^{2at+as}\int_{-at}^{-a(t+s)} e^{y}y^{2H-1}dy
	\\
	&\quad+e^{as}\int_0^{-at} e^{-y}y^{2H-1}dy+2e^{2at+as}\int_0^{1}e^{y}y^{2H-1}dy
	\\
	 &\quad+2e^{2at+as}\int_1^{-a(t+s)}e^{y}y^{2H-1}dy\bigg)
	\\
	 &=\frac{H\sigma^2}{2(-a)^{2H}}\bigg(-e^{as}\int_1^{-as}e^{y}y^{2H-1}dy+e^{-as}\int_{-as}^{-a(t+s)} e^{-y}y^{2H-1}dy
	\\
	&\quad-e^{2at+as}\int_{-at}^{-a(t+s)} e^{y}y^{2H-1}dy+2e^{2at+as}\int_1^{-a(t+s)}e^{y}y^{2H-1}dy\bigg)+O(e^{as})
	\end{split}
	\end{equation*}
	
	Integrating by parts and reducing similar terms, we obtain \eqref{eq: alt form of fCIR cov function}.
\end{proof}

\begin{remark}
	The results above are consistent with the results of \cite{CKM}. Indeed, let $a<0, H\in(0,\frac{1}{2})\cup(\frac{1}{2},1]$.
	In the proof of \cite[Corollary 2.5]{CKM} it is shown that
	\begin{equation}\label{eq: cov-cov}
	\begin{gathered}
	R_H(t+s,t)=\cov(\tilde{Y}_t^H, \tilde{Y}_{t+s}^H)-e^{at}\cov(\tilde{Y}_0^H, \tilde{Y}_{t+s}^H)+O(e^{as}),\quad s\rightarrow\infty,
	\end{gathered}
	\end{equation}
	where
	\begin{equation*}
	\tilde{Y}_t^H:=\sigma\int_{-\infty}^t e^{a(t-u)}dB_u^H,
	\end{equation*}
	and in the proof of \cite[Theorem 2.3]{CKM} the following expression for $\cov(\tilde{Y}_t^H, \tilde{Y}_{t+s}^H)$ as $s\rightarrow\infty$ is obtained:
	\begin{equation}\label{Cheridito form}
	\begin{split}
	\cov(\tilde{Y}_t^H, \tilde{Y}_{t+s}^H)=&\frac{\sigma^2}{2(-a)^{2H}}H(2H-1)\times
	\\
	 &\times\left(e^{as}\int_1^{-as}e^yy^{2H-2}dy+e^{-as}\int_{-as}^{\infty}e^{-y}y^{2H-2}dy\right)+O(e^{as})
	\end{split}
	\end{equation}
	From \eqref{Cheridito form} and \eqref{eq: cov-cov} we can get an expression than coincides with \eqref{eq: alt form of fCIR cov function}.
\end{remark}

Note, that the integral $J_t$, defined by \eqref{eq:int}, is equal to $e^{-a t}Y_t$, where $Y_t$ is the process \eqref{eq: 3} with parameters $y_0=0$, $\sigma=1$.
Thus, we have the following result
\begin{corollary}\label{cor:1}
    If $s\le t$
    \begin{align*}
        \cov(J_s,J_t) = &-\frac{H}{2}e^{-2a s}\int_{0}^{t-s}z^{2H-1}e^{-a z} dz + \frac{H}{2}e^{-2a t} \int_{t-s}^{t}z^{2H-1}e^{a z}dz
        \\
        &-\frac{H}{2}\int_{s}^{t}z^{2H-1}e^{-a z}dz +\frac{H}{2}e^{-2a s} \int_{0}^{s}z^{2H-1}e^{a z}dz
        \\
        &+H\int_{0}^{t}z^{2H-1}e^{-a z}dz.
	\end{align*}
    In particular,
    \[
        \var J_{t} = H\int_{0}^{t}z^{2H-1} \left(e^{az-2at} +e^{-a z}\right) dz.
	\]
\end{corollary}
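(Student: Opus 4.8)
The plan is to deduce Corollary~\ref{cor:1} directly from Proposition~\ref{prop1}, using the identity $J_t = e^{-at}Y_t$ noted just above the statement, where $Y$ denotes the fractional Ornstein--Uhlenbeck process \eqref{eq: 3} with $y_0=0$ and $\sigma=1$. The whole content of the covariance formula is already in Proposition~\ref{prop1}; what remains is bookkeeping of exponential prefactors.

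First I would observe that since $y_0=0$ the process $Y$ is centered, hence so is $J$, and therefore for $s\le t$ one has $\cov(J_s,J_t)=\E[J_sJ_t]=e^{-as}e^{-at}\,\E[Y_sY_t]=e^{-a(s+t)}R_H(t,s)$. Then I would substitute the closed form \eqref{eq:covOU} with $\sigma=1$ and distribute the factor $e^{-a(s+t)}$ over the five integral terms. The prefactors collapse as $e^{-a(s+t)}e^{a(t-s)}=e^{-2as}$, $e^{-a(s+t)}e^{-a(t-s)}=e^{-2at}$, and $e^{-a(s+t)}e^{a(t+s)}=1$; applied termwise to $\tfrac{H}{2}\bigl(\cdots\bigr)$ this turns the five summands of \eqref{eq:covOU} into exactly the five summands claimed for $\cov(J_s,J_t)$, with the last one acquiring the coefficient $2\cdot\tfrac12=1$.

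Finally, for the ``in particular'' part I would set $s=t$: the first and third integrals then run over the degenerate interval $[0,0]$ (resp.\ $[t,t]$) and vanish, the second and fourth summands coincide and combine into $He^{-2at}\int_0^t z^{2H-1}e^{az}\,dz$, and adding the fifth summand $H\int_0^t z^{2H-1}e^{-az}\,dz$ yields $\var J_t=H\int_0^t z^{2H-1}\bigl(e^{az-2at}+e^{-az}\bigr)\,dz$, which is the asserted formula.

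There is no genuine difficulty in this argument; the only points requiring care are the orientation convention $s\le t$ inherited from \eqref{eq:covOU} and the correct matching of signs and exponents when the prefactor $e^{-a(s+t)}$ is absorbed into each term.
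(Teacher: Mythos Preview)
Your proposal is correct and follows exactly the approach indicated in the paper: the corollary is stated immediately after the observation that $J_t=e^{-at}Y_t$ with $y_0=0$, $\sigma=1$, and your computation simply spells out the multiplication of \eqref{eq:covOU} by $e^{-a(s+t)}$ and the specialization $s=t$ that the paper leaves implicit. There is nothing to add.
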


\end{document}